\theoremstyle{plain} 
\newtheorem{theorem}{Theorem}[section]
\newtheorem{proposition}[theorem]{Proposition}
\newtheorem{lemma}[theorem]{Lemma}
\theoremstyle{definition} \newtheorem{definition}[theorem]{Definition}
\theoremstyle{remark} \newtheorem{remark}[theorem]{Remark}
\newcommand{\set}[1]{\ensuremath{\{ #1 \}}}
\newcommand{\suchthat}{\ensuremath{\, \vert \,}}
\newcommand{\ncom}{\newcommand}
\ncom{\mylabel}[1]{{\rm (#1)}\label{#1}}
\ncom{\Hom}{{\textit{Hom}}}
\ncom{\eop}{{\hfill $\Box$}}
\begin{document}
\baselineskip=16pt

\footnotetext{Mathematics Classification Number: 14D20, 14H40, 14H52, 14F25.}
\footnotetext{Keywords:  Jacobian varieties, Elliptic curves, vector bundles, moduli spaces.}

\setcounter{tocdepth}{1}

\title[Tautological classes]{Tautological algebra of the moduli space
  of semistable bundles on an elliptic curve}
  
\author[Arijit Mukherjee]{Arijit Mukherjee}
\address{School of Mathematics and Statistics\\University of Hyderabad\\Prof. C. R. Rao Road\\P.O - Central University\\Gachibowli, Hyderabad - 500046\\India.}
\email{mukherjee90.arijit@gmail.com}

\begin{abstract}
In this paper, our aim is to find the relations
amongst the cohomology classes of Brill-Noether subvarieties of the moduli
space of semistable bundles over an elliptic curve.  We obtain results similar
to the Poincar\'e relations on a Jacobian variety.
\end{abstract}
\maketitle

\tableofcontents

\section{Introduction}
\label{sec:Introduction}

Suppose $C$ is a smooth projective variety of genus $g$, defined over the complex numbers.
The Picard variety $Pic^d(C)$ is the moduli space of isomorphism classes of line bundles of degree $d$ on $C$. For $1\leq i\leq g-1$, there are naturally defined subvarieties, namely the Brill-Noether subvarieties $W_i$ of $Pic^i(C)$, which parametrize line bundles $L$ with $h^0(L)\geq 1$. Let $\Theta$ denote the theta divisor $W_{g-1}\subset Pic^{g-1}(C)$. Choosing isomorphisms $Pic^i(C)\simeq J(C)$, the classical Poincar\'e relations determine the relations between the cohomological classes of $W_i$, in $J(C)$:
$$
 [W_i]=\frac{1}{(g-i)!}[\Theta]^{g-i} \,\in\, H^*(J(C),\mathbb{Q}).
$$
See {\cite[Ch 1, \S 5,  p.~25]{ACGH}}. 
  
  Our aim in this paper is to investigate relations between the cohomology classes of the  Brill-Noether subvarieties in the moduli space of higher rank vector bundles on a curve. Brill-Noether subvarieties are more subtle in this situation and not much seems to be known in this direction. 
We consider genus one case in this paper.

Let $E$ denote a complex elliptic curve.  The moduli space of
semistable bundles over $E$ has been computed by M.~F.~Atiyah
\cite{At}.  L.~Tu \cite{T} explicitly described the Brill-Noether subvarieties
in this moduli space. In this paper, a \emph{tautological class} is
the cohomology class of Brill-Noether subvariety in the cohomology
ring and the subalgebra of the cohomology ring generated by those
tautological classes is called \emph{tautological algebra}.  We describe the relations amongst the tautological classes in the cohomology ring of the moduli space in this paper.  

We denote by $f$ the \emph{strong equivalence class} of $F$, a vector
bundle over $E$, in the sense of \cite[\S 8, p.~333]{Ses67}. Let
$\mathcal{M}_{E}(r,d)$ be the moduli space of strong equivalence
classes of semistable bundles of rank $r$ and degree $d$ over $E$.
  Let $X$
be a subvariety of $\mathcal{M}_{E}(r,d)$(or of
$\mathcal{SU}_{E}(r,L)$), then by $[X]$ we denote the cohomology class of $X$ in
$H^*(\mathcal{M}_{E}(r,d))$(respectively in $H^*(\mathcal{SU}_{E}(r,L))$).

By \cite{T}, we note that the Brill-Noether loci are
trivial for positive degree vector bundles (either empty or the whole
moduli space), and for line bundles of degree $0$ (either empty or
singleton).  Therefore, we consider the Brill-Noether loci when
degree of a vector bundle is $0$ and the rank is more than $1$.  Let
$L$ be a line bundle of degree $0$ and let $i$ be any non-negative
integer. The Brill-Noether loci inside $\mathcal{SU}_E(r,L)$ are defined as follows:
\begin{equation*}
W_{r,L}
^{i}(\exists):=\set{f\in \mathcal{SU}_E (r,L)\suchthat h^{0}(F)\geq
	i+1 \text{ for some }F\in f }.
\end{equation*}  
We denote by $[W_{r,L}^i(\exists)]$, a tautological class in
$H^*(\mathcal{SU}_{E}(r,L),\mathbb{Z})$.  We also consider the Brill-Noether
loci, denoted by $W_{r,0} ^{i}(\exists)$, inside
$\mathcal{M}_{E}(r,0)$, defined as follows:
\begin{equation*}
 W_{r,0}^{i}(\exists):=\set{f\in \mathcal{M}_E (r,0)\suchthat h^{0}(F)\geq
	i+1 \; \text{ for some }F\in f }.
\end{equation*} 
(See \cite{T}).

In \S \ref{sec:main th}, we prove the main theorems on the relations amongst the
tautological classes in
$H^*(\mathcal{SU}_{E}(r,L),\mathbb{Z})$ and in $H^*(\mathcal{M}_{E}(r,0),\mathbb{Z})$.  
We show:
\begin{theorem}
Let $r$ be any positive integer and let $L$ be a degree $0$ line
bundle over $E$. Then $W_{r,L} ^{0}(\exists)$ is a divisor inside
$\mathcal{SU}_{E}(r,L)$. Moreover, in $H^{\ast}(\mathcal{SU}_{E}(r,L),\mathbb{Z})$ we
have,
\begin{equation}\label{rel1}
[W_{r,L} ^{i}(\exists)]=[W_{r,L} ^{0}(\exists)]^{i+1}
\end{equation} 
for all $0\leq i\leq r-2$ and the tautological algebra of $\mathcal{SU}_{E}(r,L)$ is $\mathbb{Z}[\zeta]/(\zeta^r)$, where $\zeta$ is the cohomology class of $W^0_{r,L}(\exists)$ in $H^*(\mathcal{SU}_E(r,L),\mathbb{Z})$.
\end{theorem}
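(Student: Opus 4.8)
The plan is to reduce the statement to the cohomology of a projective space. By Atiyah's classification \cite{At} and Tu's description \cite{T}, every semistable bundle of rank $r$ and degree $0$ on $E$ is S-equivalent to a direct sum $\xi_1\oplus\cdots\oplus\xi_r$ of degree-$0$ line bundles; consequently $\mathcal{M}_E(r,0)\cong\operatorname{Sym}^r(\operatorname{Pic}^0 E)$, and under this identification the determinant morphism becomes the multiplication map $m\colon\operatorname{Sym}^r(\operatorname{Pic}^0 E)\to\operatorname{Pic}^0 E$. Fixing an origin $O\in E$ to identify $E\cong\operatorname{Pic}^0 E$ and transporting $m$ to $\operatorname{Sym}^r(E)$, it sends a degree-$r$ effective divisor $D$ to $\mathcal{O}_E(D-rO)$; hence the fibre over $L$ is $\{\,D : \mathcal{O}_E(D)\cong L(rO)\,\}=|L(rO)|$, the complete linear system of a degree-$r$ line bundle on $E$. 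Since $h^0(L(rO))=r$ by Riemann--Roch, this gives an isomorphism $\mathcal{SU}_E(r,L)\cong\mathbb{P}^{r-1}$. (We assume $r\geq 2$; for $r=1$ the moduli space is a point and the range of $i$ in \eqref{rel1} is empty.)

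Next I would identify the Brill--Noether loci inside this $\mathbb{P}^{r-1}$. Among all vector bundles representing a fixed S-equivalence class $f$, the polystable (split) representative maximizes $h^0$, because $h^0$ is subadditive along the short exact sequences of a Jordan--H\"older filtration; hence $f\in W_{r,L}^{i}(\exists)$ if and only if the split representative $\xi_1\oplus\cdots\oplus\xi_r$ of $f$ satisfies $h^0\geq i+1$. A degree-$0$ line bundle on $E$ has a nonzero section precisely when it is trivial, so $h^0(\xi_1\oplus\cdots\oplus\xi_r)=\#\{\,j:\xi_j\cong\mathcal{O}_E\,\}$, which under the identifications above equals the multiplicity of $O$ in the corresponding divisor $D\in|L(rO)|$. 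Therefore $W_{r,L}^{i}(\exists)$ corresponds to $\{\,D\in|L(rO)| : D\geq (i+1)O\,\}=(i+1)O+|L((r-i-1)O)|$, which is a linear subspace of $\mathbb{P}^{r-1}$ of dimension $r-i-2$ for $0\leq i\leq r-2$. In particular $W_{r,L}^{0}(\exists)$ is a hyperplane, hence a divisor in $\mathcal{SU}_E(r,L)$.

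It then remains to compute in $H^{\ast}(\mathbb{P}^{r-1},\mathbb{Z})=\mathbb{Z}[h]/(h^{r})$, with $h$ the hyperplane class. A linearly embedded $\mathbb{P}^{r-i-2}\subset\mathbb{P}^{r-1}$ has cohomology class $h^{i+1}$, so $\zeta:=[W_{r,L}^{0}(\exists)]=h$ and $[W_{r,L}^{i}(\exists)]=h^{i+1}=\zeta^{i+1}$, which is precisely \eqref{rel1}. Since every tautological class is a power of $\zeta$, while $1,\zeta,\dots,\zeta^{r-1}$ form a $\mathbb{Z}$-basis of $H^{\ast}(\mathbb{P}^{r-1},\mathbb{Z})$ and $\zeta^{r}=0$, the subalgebra of $H^{\ast}(\mathcal{SU}_E(r,L),\mathbb{Z})$ generated by the tautological classes is exactly $\mathbb{Z}[\zeta]/(\zeta^{r})$.

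I expect the main obstacle to be the identification step: one must trace the chain $\mathcal{SU}_E(r,L)\cong m^{-1}(L)\cong|L(rO)|\cong\mathbb{P}^{r-1}$ carefully enough to be sure that the condition ``$h^{0}(F)\geq i+1$ for some $F\in f$'' corresponds exactly to the linear condition ``$D\geq(i+1)O$'' --- in particular that membership in $W_{r,L}^{i}(\exists)$ is governed by the split representative and that the correspondence respects the multiplicity of $O$. Once this dictionary is in place, the remaining cohomology computation is immediate.
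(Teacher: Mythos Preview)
Your proof is correct and follows essentially the same route as the paper: identify $\mathcal{SU}_E(r,L)$ with $\mathbb{P}^{r-1}$ and each $W_{r,L}^i(\exists)$ with a linearly embedded $\mathbb{P}^{r-i-2}$, then read off the relations from $H^*(\mathbb{P}^{r-1},\mathbb{Z})=\mathbb{Z}[\zeta]/(\zeta^r)$. The only difference is that the paper simply quotes these identifications from Tu, whereas you re-derive them via the linear system $|L(rO)|$ and the multiplicity condition $D\geq(i+1)O$, which has the advantage of making explicit that the $\mathbb{P}^{r-i-2}$'s sit \emph{linearly} in $\mathbb{P}^{r-1}$ --- precisely what the final cohomology step uses.
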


  Furthermore, we show that the relations amongst $[W_{r,0} ^{i}(\exists)]$'s 
 in $H^*(\mathcal{M}_{E}(r,0),\mathbb{Z})$, is similar to \eqref{rel1}.  In particular, we show:
\begin{theorem}
	The	tautological algebra of $\mathcal{M}_E(r,0)$ is
	$$
	H^*(E)\otimes \mathbb{Z}(\xi)/(\xi^r). 
	$$
	Here $\xi$ is the cohomology class of the divisor $W^0_{r,0}(\exists)$ on $\mathcal{M}_E(r,0)$ in $H^*(\mathcal{M}_E(r,0),\mathbb{Z})$. 
\end{theorem}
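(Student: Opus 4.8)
The plan is to study $\mathcal{M}_{E}(r,0)$ through the determinant morphism $\det\colon \mathcal{M}_{E}(r,0)\to Pic^{0}(E)$, whose fibre over a point $[L]$ is $\mathcal{SU}_{E}(r,L)$, and to bootstrap from the first theorem. By \cite{T}, $\mathcal{M}_{E}(r,0)$ is isomorphic to the symmetric product $\mathrm{Sym}^{r}\!\big(Pic^{0}(E)\big)$ compatibly with $\det$ and the summation morphism, so $\det$ is a Zariski‑locally trivial $\mathbb{P}^{r-1}$‑bundle over $Pic^{0}(E)\cong E$. Under this identification $W_{r,0}^{0}(\exists)$ corresponds to the image of $[\mathcal{O}]+\mathrm{Sym}^{r-1}\!\big(Pic^{0}(E)\big)$, hence is a smooth divisor, and its intersection with any fibre $\mathcal{SU}_{E}(r,L)$ is exactly $W_{r,L}^{0}(\exists)$, whose cohomology class generates $H^{\ast}(\mathcal{SU}_{E}(r,L),\mathbb{Z})\cong H^{\ast}(\mathbb{P}^{r-1},\mathbb{Z})$ by the first theorem. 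Consequently $\xi:=[W_{r,0}^{0}(\exists)]\in H^{2}(\mathcal{M}_{E}(r,0),\mathbb{Z})$ restricts to the hyperplane class $\zeta$ on every fibre, and in particular $W_{r,0}^{0}(\exists)$ is a divisor.

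Because $1,\xi,\dots,\xi^{r-1}$ restrict fibrewise to the integral basis $1,\zeta,\dots,\zeta^{r-1}$ of $H^{\ast}(\mathbb{P}^{r-1},\mathbb{Z})$ and $H^{\ast}(E,\mathbb{Z})$ is torsion‑free, the Leray--Hirsch theorem shows that $H^{\ast}(\mathcal{M}_{E}(r,0),\mathbb{Z})$ is a free module over $\det^{\ast}H^{\ast}(E,\mathbb{Z})$ with basis $1,\xi,\dots,\xi^{r-1}$; equivalently $H^{\ast}(\mathcal{M}_{E}(r,0),\mathbb{Z})\cong H^{\ast}(E)\otimes\mathbb{Z}(\xi)/(\xi^{r})$. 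It remains to locate the Brill--Noether classes inside this ring.

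For the relations I would induct on $r$ using the closed embedding $\iota_{r}\colon \mathcal{M}_{E}(r-1,0)\hookrightarrow\mathcal{M}_{E}(r,0)$, $[G]\mapsto[G\oplus\mathcal{O}]$, which lies over $Pic^{0}(E)$, has image $W_{r,0}^{0}(\exists)$, and satisfies $W_{r,0}^{i}(\exists)=\iota_{r}\!\big(W_{r-1,0}^{i-1}(\exists)\big)$ for $i\ge1$. The crucial input is the identity $\iota_{r}^{\ast}\xi=[W_{r-1,0}^{0}(\exists)]$, i.e. $c_{1}(N_{\iota_{r}})=[W_{r-1,0}^{0}(\exists)]$; this is the classical computation of the normal bundle of $\mathrm{Sym}^{r-1}(C)\hookrightarrow\mathrm{Sym}^{r}(C)$ (adding a fixed point), obtained by restricting the universal divisor on $C\times\mathrm{Sym}^{r-1}(C)$ to $\{\mathrm{pt}\}\times\mathrm{Sym}^{r-1}(C)$; compare \cite[Ch.~IV]{ACGH}. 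Granting it, the projection formula gives
\begin{align*}
[W_{r,0}^{i}(\exists)]&=(\iota_{r})_{\ast}[W_{r-1,0}^{i-1}(\exists)]=(\iota_{r})_{\ast}\big((\iota_{r}^{\ast}\xi)^{i}\big)\\
&=\xi^{i}\cdot(\iota_{r})_{\ast}(1)=\xi^{i}\cdot[W_{r,0}^{0}(\exists)]=\xi^{i+1},
\end{align*}
the second equality using the inductive hypothesis $[W_{r-1,0}^{i-1}(\exists)]=\xi^{i}$ in $\mathcal{M}_{E}(r-1,0)$ (the base case being immediate). Together with the previous paragraph this shows that each $[W_{r,0}^{i}(\exists)]$ is the corresponding power of $\xi=[W_{r,0}^{0}(\exists)]$ and that the tautological algebra is $H^{\ast}(E)\otimes\mathbb{Z}(\xi)/(\xi^{r})$.

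I expect the main obstacle to be precisely the normal‑bundle identity $\iota_{r}^{\ast}\xi=[W_{r-1,0}^{0}(\exists)]$: everything else is a formal consequence of the projective‑bundle structure and the first theorem, whereas this step is exactly what excludes a spurious correction term lying in $H^{2}(E)\cdot\xi^{i}$ from the expression for $[W_{r,0}^{i}(\exists)]$. A secondary point that needs a word of care is the passage to integral (rather than rational) coefficients, which is harmless here because symmetric products of smooth curves and projective bundles over $E$ have torsion‑free integral cohomology.
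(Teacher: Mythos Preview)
Your argument is correct, and its overall architecture matches the paper's: both identify the determinant map $\mathcal{M}_E(r,0)\to J(E)\cong E$ with the Abel--Jacobi map $S^rE\to J_r(E)$, recognize it as a $\mathbb{P}^{r-1}$-bundle, and then invoke the projective bundle formula/Leray--Hirsch to obtain $H^*(\mathcal{M}_E(r,0))\cong H^*(E)\otimes\mathbb{Z}[\xi]/(\xi^r)$.

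Where you diverge is in establishing the key identity $[W^i_{r,0}(\exists)]=\xi^{i+1}$. The paper simply observes via its diagram~\eqref{diagram 2} that $W^i_{r,0}(\exists)\to J(E)$ is itself a $\mathbb{P}^{r-i-2}$-subbundle and declares the identity to follow. Your inductive route through the embedding $\iota_r\colon S^{r-1}E\hookrightarrow S^rE$, the self-intersection/normal-bundle formula $\iota_r^*\xi=[W^0_{r-1,0}(\exists)]$, and the projection formula is more careful: it explicitly rules out the correction terms in $\det^*H^2(E)\cdot\xi^i$ that a linear sub-projective-bundle could in principle contribute, a point the paper passes over in silence. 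The paper's argument is shorter and more geometric in spirit (everything is a projective subbundle of a projective bundle over a one-dimensional base), while yours is sharper about why no base contribution appears; the normal-bundle computation you flag as the crux is exactly the ingredient that makes this honest, and your verification via the universal divisor on $E\times S^rE$ (as in \cite[Ch.~IV]{ACGH}) is the right way to nail it down.
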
 
 
 It is interesting to investigate if similar relations hold in higher genus case.

%%%%%%%%%%%%%%%%%%%%%%%%%%%%%%%%%%%%%%%%%%%
\section{Notations}

All the varieties are defined over the complex numbers.

1)  $\mathcal{SU}_{E}(r,L)$ denotes the moduli space of strong equivalence
classes of semistable bundles of rank $r$ and fixed determinant $L$ of degree $d$ over $E$.

2)  $S^nE$ is
the $n$th symmetric product of the curve $E$.  

3)
 $J(E)$ is the Jacobian variety of isomorphism classes of line bundles of
degree $0$ over $E$.  

4) $J_{d}(E)$ is the
isomorphism classes of line bundles of degree $d$ over $E$.

5) $det F$ is the determinant bundle of a semistable vector bundle $F$ over $E$.

6) $\mathcal{O}(D)$ denotes the line bundle corresponding to a divisor $D$ on $E$.

7) $\mathcal{O}_E$ is the trivial line bundle over $E$.

8) $h^i(E,F)$ (or $h^i(F)$) denotes the dimension of $H^i(E,F)$ for a semistable bundle $F$ over $E$.

9) By $H^{\ast}(X)$ we mean $H^{\ast}(X,\mathbb{Z})$, the cohomology ring of a complex variety $X$ with integral coefficients. 

%%%%%%%%%%%%%%%%%%%%%%%%%%%%%%%%%%%%%%%%%%%%

\section{Brill-Noether loci of semistable bundles over an elliptic curve}
\label{sec:BN-loci}

\subsection{Bundles with positive degree}

In this section we recall a few results from \cite{T}
which will be relevant for the next section.  The following
lemma is proved in \cite{T}.  For the sake of completeness we include the lemma with proof here.

\begin{lemma}\label{L1}{\cite[Lemma 17, p.~13]{T}}
Any semistable bundle $F$ of positive degree over $E$ is non-special,
that is, $h^1(F)=0$.
\end{lemma}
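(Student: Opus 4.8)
The plan is to reduce the vanishing of $h^1$ to a vanishing of $h^0$ via Serre duality, and then to use semistability directly. Since $E$ is an elliptic curve, its canonical bundle $K_E$ is trivial, so Serre duality gives an isomorphism $H^1(E,F)\cong H^0(E,F^{\vee})^{\vee}$; hence $h^1(F)=h^0(F^{\vee})$, and it suffices to prove that $F^{\vee}$ has no nonzero global sections.

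Next I would observe that $F^{\vee}$ is again semistable: a sub-line-bundle of $F^{\vee}$ is dual to a line-bundle quotient of $F$, and semistability of $F$ forces every quotient to have slope at least $\mu(F)$, so every sub-line-bundle of $F^{\vee}$ has slope at most $-\mu(F)=\mu(F^{\vee})$. Moreover $\deg F^{\vee}=-\deg F<0$, so $\mu(F^{\vee})<0$. Now suppose $s\in H^0(E,F^{\vee})$ is nonzero. It defines a nonzero morphism $\mathcal{O}_E\to F^{\vee}$, which is injective because $E$ is integral and any torsion subsheaf of the line bundle $\mathcal{O}_E$ vanishes. Its image is a rank-one subsheaf of $F^{\vee}$ isomorphic to $\mathcal{O}_E$, of degree $0$; replacing it by its saturation in $F^{\vee}$ produces a sub-line-bundle $M\subset F^{\vee}$ with $\deg M\ge 0$. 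This contradicts semistability, which requires $\mu(M)=\deg M\le \mu(F^{\vee})<0$. Therefore $h^0(F^{\vee})=0$, and so $h^1(F)=0$. (Riemann--Roch on $E$ then also yields $h^0(F)=\chi(F)=\deg F$.)

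I do not expect any serious obstacle here: the argument is a direct application of the triviality of $K_E$, the stability of the dual, and the definition of semistability. The only step needing a small amount of care is the passage from a nonzero global section of $F^{\vee}$ to a genuinely destabilizing sub-line-bundle, i.e. checking injectivity of the induced map from $\mathcal{O}_E$ and that the saturation has non-negative degree; both are routine on a smooth projective curve.
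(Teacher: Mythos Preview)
Your proof is correct and follows essentially the same route as the paper: Serre duality with $K_E\cong\mathcal{O}_E$ reduces $h^1(F)$ to $h^0(F^\vee)$, and then semistability of $F^\vee$ together with its negative degree forces $h^0(F^\vee)=0$. The paper simply asserts the vanishing $h^0(F^\vee)=0$ from semistability and negative degree, whereas you spell out the destabilizing sub-line-bundle argument; this is extra detail rather than a different approach.
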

\begin{proof}
Let $F$ be a semistable vector bundle of degree $d>0$ and $K_E$ be the
canonical line bundle over $E$.  Then $K_E\cong \mathcal{O}_E$ as $E$
is an elliptic curve.  By Serre duality we have
\begin{equation*}
h^1(F)=h^0(K_E\otimes F^{\ast})=h^0(F^{\ast})
\end{equation*}
As $F^{\ast}$ is also a semistable bundle and of negative degree,
$h^0(F^{\ast})=0$.  Therefore, $h^1(F)=0$.  Moreover by Riemann-Roch
theorem, $h^0(F)=d$.
\end{proof}
\begin{remark}\label{R1}
A consequence of Lemma \ref{L1} is that the map 
\begin{equation*}
\begin{split}
h^{0}:\mathcal{M}_E (r,d)&\longrightarrow \mathbb{Z}_{+}\cup
        \{0\} \\ 
f&\mapsto h^0(F)
\end{split}
\end{equation*}
is well defined for $d>0$, and is the constant function $d$.
\end{remark}
\begin{definition}
Let $d>0$ and $i\geq 0$ be any two integer.  The \emph{Brill-Noether
  loci} are defined by
\begin{equation*}
W^{i}_{r,d}:=\set{f\in \mathcal{M}_{E}(r,d)\suchthat h^{0}(F)\geq i+1 }.
\end{equation*}
This definition is well defined by Remark \ref{R1}.
\end{definition}
The following lemma is a direct consequence of Lemma \ref{L1}.  See
\cite[p.~13]{T}.
\begin{lemma}\label{L2}
Let $d>0$.  Then
\begin{equation*} 
	W^{i}_{r,d} \cong \left\{ \begin{array}{ll}
	\emptyset & \text{if } 1\leq d \leq i;\\
	\mathcal{M}_{E}(r,d) & \text{if } d\geq i+1.\end{array} \right. 
\end{equation*}
\end{lemma}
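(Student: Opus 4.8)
The plan is to deduce the statement directly from Remark \ref{R1}, with essentially no computation. Recall that by Lemma \ref{L1} (equivalently Remark \ref{R1}), for $d>0$ the assignment $f\mapsto h^0(F)$ is a well-defined function on $\mathcal{M}_E(r,d)$ and is the constant function $d$. So the first step is simply to rewrite the defining condition of the Brill--Noether locus: a point $f\in\mathcal{M}_E(r,d)$ lies in $W^i_{r,d}$ exactly when $h^0(F)\geq i+1$, and since $h^0(F)=d$ for \emph{every} such $f$, this inequality reduces to the condition $d\geq i+1$, which involves no dependence on $f$ whatsoever.

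From this dichotomy the two cases of the lemma are immediate. If $d\geq i+1$, then the inequality $h^0(F)\geq i+1$ holds for all $f$, so $W^i_{r,d}=\mathcal{M}_E(r,d)$. If instead $1\leq d\leq i$, the inequality fails for all $f$, so $W^i_{r,d}=\emptyset$. I would present these as the two branches of the displayed formula, citing \cite[p.~13]{T} for the statement as in the excerpt.

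The only point deserving a word of care --- and it is the one mild obstacle, though it is not a serious one --- is the scheme (rather than merely set-theoretic) structure on $W^i_{r,d}$ when $d\geq i+1$: one wants the isomorphism with $\mathcal{M}_E(r,d)$ to respect the natural structure on the Brill--Noether locus and not, say, a nonreduced thickening. This is handled by invoking the standard local determinantal description of the Brill--Noether locus together with the vanishing $h^1(F)=0$ from Lemma \ref{L1}: in positive degree the relevant evaluation map has locally constant rank, so its degeneracy locus is either all of $\mathcal{M}_E(r,d)$ with its reduced structure or empty. Beyond this remark, the proof needs nothing past Remark \ref{R1}.
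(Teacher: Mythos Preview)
Your argument is correct and matches the paper's approach: the paper simply notes that the lemma is a direct consequence of Lemma~\ref{L1} (equivalently Remark~\ref{R1}) and cites \cite[p.~13]{T}, which is exactly the deduction you carry out. Your additional remark about the scheme structure is a nice clarification but goes beyond what the paper records.
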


Therefore Brill-Noether loci inside $\mathcal{M}_{E}(r,d)$ are not of
much interest when $d>0$.  Moreover for degree $0$ line bundles over
$E$ we have the following result.

\begin{lemma}
\label{L3}  The Brill-Noether loci for $d=0$, $r=1$ are 
\begin{equation*} 
W^{i}_{1,0} \cong \left\{ \begin{array}{ll}
	\emptyset & \mbox{if $1\leq i$};\\ \{\mathcal{O}_E\} &
        \mbox{if $i=0$}.\end{array} \right.
\end{equation*}
\end{lemma}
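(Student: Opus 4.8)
The plan is to reduce the statement to the elementary computation of $h^0$ of a degree $0$ line bundle on $E$. Since every line bundle on $E$ is stable and strong equivalence of stable bundles is ordinary isomorphism, $\mathcal{M}_E(1,0)$ is just $J(E) = Pic^0(E)$, and $W^{i}_{1,0}$ is by definition the set of $L \in Pic^0(E)$ with $h^0(L) \geq i+1$. So it suffices to determine $h^0(L)$ for $L$ of degree $0$.

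The key step is the claim that $h^0(L) = 1$ if $L \cong \mathcal{O}_E$ and $h^0(L) = 0$ otherwise. A nonzero section $s \in H^0(L)$ gives a sheaf injection $\mathcal{O}_E \hookrightarrow L$ whose cokernel is supported on the vanishing locus of $s$ and has length $\deg L - \deg \mathcal{O}_E = 0$; hence the cokernel is zero and $s$ trivializes $L$, forcing $L \cong \mathcal{O}_E$. Conversely $H^0(\mathcal{O}_E)$ is the space of global regular functions on the connected projective curve $E$, so $h^0(\mathcal{O}_E) = 1$. (Alternatively, combine Serre duality as in Lemma \ref{L1} with Riemann--Roch to get $h^0(L) = h^1(L) = h^0(L^\ast)$ and then run the same divisor argument.)

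From this the lemma follows immediately. For $i = 0$ the condition $h^0(L) \geq 1$ holds exactly when $L \cong \mathcal{O}_E$, so $W^{0}_{1,0} = \{\mathcal{O}_E\}$; for $i \geq 1$ the condition $h^0(L) \geq i+1 \geq 2$ is never met, so $W^{i}_{1,0} = \emptyset$. There is no real obstacle in the argument: the only point needing a (short) justification is that a degree $0$ line bundle carrying a section must be trivial with a one-dimensional space of sections, which is precisely the divisor computation above. The lemma is thus the bookkeeping fact that makes precise why the Brill--Noether theory of degree $0$ bundles is trivial in rank $1$, so that one is forced to pass to higher rank, as in the rest of the paper.
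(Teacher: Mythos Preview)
Your proof is correct and follows the same approach as the paper: both reduce to the fact that $h^0(L)\in\{0,1\}$ for $L\in Pic^0(E)$ with $h^0(L)=1$ iff $L\cong\mathcal{O}_E$. The paper simply asserts this fact (citing \cite{T}), whereas you supply the standard divisor argument showing that a nonzero section of a degree $0$ line bundle must be nowhere vanishing; this extra detail is fine but not a different method.
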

\begin{proof}
See \cite[p.~13]{T}.  As $h^0(L)=0$ or $1$ for a line
bundle $L$ of degree zero over $E$ and moreover $h^0(L)=1$ if and only
if $L\cong \mathcal{O}_E$.
\end{proof}

\subsection{Degree zero bundles}

When $d=0$, $h^{0}:\mathcal{M}_E
(r,0)\longrightarrow \mathbb{Z}_{+}\cup \{0\} $ is not well defined.
For example, let $F_2$ be the \emph{Atiyah's indecomposable bundle} of
rank $2$ and $I_{2}$ be the trivial
bundle of rank $2$.  Then $F_{2}\cong I_{2}$, but $h^{0}(F_{2})=1\neq
2= h^{0}(I_{2})$, \cite[proof of Theorem 5, p.~432]{At}. 

In this case,  
Brill-Noether loci inside $\mathcal{SU}_E(r,L)$ and $\mathcal{M}_E (r,0)$ are defined, see \cite[p.~5]{T}. Here $L$ is a line bundle on $E$ of degree $0$.
 
\begin{definition}  
\begin{equation*}
\begin{split}
W_{r,L}
        ^{i}(\exists)& :=\set{f\in \mathcal{SU}_E(r,L)\suchthat h^{0}(F)\geq
          i+1 \text{ for some }F\in f }.\\
	 W_{r,0}
        ^{i}(\exists)& :=\set{f\in \mathcal{M}_E (r,0)\suchthat h^{0}(F)\geq
        i+1 \; \text{ for some }F\in f }
	\end{split}
	\end{equation*}
\end{definition}

We have the equality:
\begin{equation*}
W_{r,L} ^{i}(\exists)= W_{r,0} ^{i}(\exists) \cap \mathcal{SU}_E (r,L).
\end{equation*}

We now have the following isomorphism of moduli spaces.
\begin{proposition}\label{P1}
	Let $r>1$ be a positive integer and $L$ a degree $0$ line bundle over $E$.  Then we have the isomorphisms:
	\begin{equation*}
	\begin{split}
	\mathcal{M}_E(r,0)&\cong S^r E\\
	\mathcal{SU}_{E}(r,L)&\cong \mathbb{P}^{r-1}\\
	J(E)&\cong E\\
	W_{r,0}^i(\exists)&\cong S^{r-i-1} E\\
	W_{r,L}^i(\exists)&\cong \mathbb{P}^{r-i-2}.
	\end{split}
	\end{equation*}
\end{proposition}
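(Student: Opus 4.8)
The plan is to deduce all five isomorphisms from Atiyah's structure theory for $\mathcal{M}_E(r,0)$ in \cite{At} together with the classical geometry of symmetric products of a genus-one curve, recovering the description of \cite{T}. First I would recall, from \cite{At}, that a semistable bundle $F$ of rank $r$ and degree $0$ on $E$ admits a Jordan--H\"older filtration whose graded pieces are degree-$0$ line bundles $L_1,\dots,L_r$, and that two such bundles are strongly equivalent precisely when the unordered tuples $\{L_1,\dots,L_r\}$ coincide; this identifies $\mathcal{M}_E(r,0)$, as a variety, with $S^r(\mathrm{Pic}^0 E)$. Since $E$ is elliptic, fixing a base point $e_0\in E$ gives an isomorphism $\mathrm{Pic}^0 E\xrightarrow{\sim}E$, $M\mapsto x$ with $M\cong\mathcal{O}_E(x-e_0)$; in particular $J(E)\cong E$, and taking $r$-th symmetric products yields $\mathcal{M}_E(r,0)\cong S^r E$. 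Here $e_0$ is also the point of $E$ corresponding to $\mathcal{O}_E$.

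For the fixed-determinant statement, I would observe that under this identification the determinant morphism $\det\colon\mathcal{M}_E(r,0)\to\mathrm{Pic}^0 E$, which sends the class of $\bigoplus L_i$ to $\bigotimes L_i$, becomes $D\mapsto\mathcal{O}_E(D-re_0)$; up to the translation isomorphism $\mathrm{Pic}^0 E\cong\mathrm{Pic}^r E$ this is the Abel--Jacobi map $S^r E\to\mathrm{Pic}^r E\cong E$, $D\mapsto\mathcal{O}_E(D)$. For $r\ge1$, Riemann--Roch on $E$ gives $h^0(\mathcal{O}_E(D))=r$ for every effective divisor $D$ of degree $r$, so each fibre of this map is the complete linear system $|D|\cong\mathbb{P}^{r-1}$; in fact $S^r E$ is a $\mathbb{P}^{r-1}$-bundle over $E$. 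Since $\mathcal{SU}_E(r,L)$ is exactly the fibre of $\det$ over $[L]$, we obtain $\mathcal{SU}_E(r,L)\cong\mathbb{P}^{r-1}$.

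For the Brill--Noether loci, the key point (due to \cite{T}) is that for a class $f$ represented by $\bigoplus_{i=1}^{r}L_i$ one has $\max_{F\in f}h^0(F)=\#\{\,i:L_i\cong\mathcal{O}_E\,\}$: any $F\in f$ is a successive extension of the $L_i$, so $h^0(F)\le\sum_i h^0(L_i)$, while $h^0(L_i)=0$ unless $L_i\cong\mathcal{O}_E$ by Lemma \ref{L3}, and the bound is attained by $F=\bigoplus L_i$. Transporting this through the identification above, with $e_0$ corresponding to $\mathcal{O}_E$, gives
\[
W_{r,0}^{i}(\exists)=\set{D\in S^r E \suchthat \mathrm{mult}_{e_0}(D)\ge i+1},
\]
and $D'\mapsto(i+1)e_0+D'$ is then an isomorphism $S^{r-i-1}E\xrightarrow{\sim}W_{r,0}^{i}(\exists)$. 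Intersecting with a fibre of $\det$, via $W_{r,L}^{i}(\exists)=W_{r,0}^{i}(\exists)\cap\mathcal{SU}_E(r,L)$, shows that $W_{r,L}^{i}(\exists)$ consists of the divisors $D$ in the fixed linear system with $\mathrm{mult}_{e_0}(D)\ge i+1$; writing $D=(i+1)e_0+D'$ identifies it with the complete linear system $|D'|$, of degree $r-i-1$, which for $0\le i\le r-2$ is a linear subspace $\mathbb{P}^{r-i-2}\subset\mathbb{P}^{r-1}$ by Riemann--Roch.

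The set-theoretic bijections above are essentially formal; the genuine content, and the main obstacle, will be checking that these parametrizations are isomorphisms of \emph{varieties} and that $\det$ matches the Abel--Jacobi map under them. This is exactly where one must invoke the moduli constructions and their functorial properties from \cite{At} and \cite{T} rather than argue by hand; granting this, the Brill--Noether statements reduce to elementary computations with linear systems on the elliptic curve.
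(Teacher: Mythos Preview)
Your argument is correct and follows exactly the line of reasoning in \cite{T} (Jordan--H\"older factors identify $\mathcal{M}_E(r,0)$ with $S^r(\mathrm{Pic}^0 E)\cong S^rE$, the determinant map becomes the Abel--Jacobi map with $\mathbb{P}^{r-1}$ fibres, and the Brill--Noether condition counts trivial factors), which is precisely what the paper invokes: its entire proof of this proposition is the citation ``See \cite[Theorem 2, 3, 4 and 5; Corollary 15]{T}.'' You have simply unpacked that reference, so there is nothing to compare beyond the level of detail.
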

\begin{proof} See
{\cite[Theorem 2, 3, 4 and 5; Corollary 15; p.~4-5, 10]{T}}.

\end{proof}

%%%%%%%%%%%%%%%%%%%%%%%%%%%%%%%%%%%%%%%%%%%%%%%%%
\section{Tautological algebra of semistable bundles of degree zero over an elliptic curve}
\label{sec:tautological-alg}

Consider the map:
\begin{equation}\label{E1}
\begin{split}
\pi : J(E)\times \mathcal{SU}_{E}(r,L) &\rightarrow \mathcal{M}_E(r,d)\\ (l,F)
&\mapsto l\otimes F.
\end{split}
\end{equation}
See \cite[p.~338]{BT} and \cite[p.~348]{DT}.

This map as in \eqref{E1} suggests that the cohomology subalgebra generated by $[W^i_{r,0}(\exists)]$'s is the same as that generated by $[W^i_{r,L}(\exists)]$'s with coefficients lying in $H^*(J(E))$. We make this precise in the next section. \\

%We will utilise the following commutative diagrams in the proofs, in next section.\\
Let $det: \mathcal{M}_E(r,0) \rightarrow J(E)$ be the \emph{determinant} map which sends $F$ to $det F$ and $\alpha:S^{r}E \rightarrow J_{r}(E)$ be the \emph{Abel-Jacobi} map defined as $x_1+x_2+\cdots +x_r \mapsto \mathcal{O}(x_1+x_2+\cdots +x_r)$.  Then we have the following commutative diagram (\cite[p.~12]{T}), which says that the \emph{determinant} map and the \emph{Abel-Jacobi} map can be identified.
\begin{equation}\label{diagram 1}
\begin{tikzcd}
\mathcal{M}_E(r,0) \arrow{r}{\cong} \arrow{d}{det}
&S^{r}E \arrow{d}{\alpha}\\
J(E) \arrow{r}{\cong} &J_{r}(E)
\end{tikzcd}
\end{equation}
\begin{remark}\label{R2}
By \emph{Abel}'s theorem (\cite[p.~18]{ACGH}), fiber of the map $\alpha$ over any line bundle $L\in J_r(E)$ is the complete linear system $\mid D\mid$ of a divisor $D$ on $E$ with $\mathcal{O}(D)=L$.  Now if $r>0$, then by Serre duality $h^1(E,\mathcal{O}(D))=0$ and by Riemann-Roch theorem $h^0(E,\mathcal{O}(D))=r$.  Therefore each fiber of the map $\alpha:S^{r}E \rightarrow J_{r}(E)$ is isomorphic to $\mathbb{P}^{r-1}$ if $r>0$.  This is also another reason to work in $\mathcal{M}_E(r,0)$ with $r>0$.
\end{remark}
Let $det: W^i_{r,0}(\exists)\rightarrow J(E)$ be the restriction of the \emph{determinant} map $det: \mathcal{M}_E(r,0) \rightarrow J(E)$.  Then we have the following commutative diagram (\cite[p.~16]{T}) similar to (\ref{diagram 1}).

\begin{equation}\label{diagram 2}
\begin{tikzcd}
W^i_{r,0}(\exists) \arrow{r}{\cong} \arrow{d}{det}
&S^{r-i-1}E \arrow{d}{\alpha}\\
J(E) \arrow{r}{\cong} &J_{r-i-1}(E)
\end{tikzcd}
\end{equation}
Here we assume $0\leq i\leq r-2$.  So by Remark \ref{R2}, the fiber of $\alpha$ is isomorphic to $\mathbb{P}^{r-i-2}$.

%%%%%%%%%%%%%%%%%%%%%%%%%%%%%%%%%%%%%%%%%%%%%%%%%%%%%%%%%%%%%%%%%%%%%%%

\section{Main theorems}
\label{sec:main th}
In this section we define the tautological subalgebra of $H^*(\mathcal{M}_{E}(r,0),\mathbb{Z})$ and $H^*(\mathcal{SU}_{E}(r,L),\mathbb{Z})$ and prove our main theorems.
\begin{definition}
	The cohomology classes $[W^{i}_{r,0}(\exists)]\in H^*(\mathcal{M}_{E}(r,0),\mathbb{Z})$ are called the \emph{tautological classes}.  The subalgebra of $H^*(\mathcal{M}_{E}(r,0),\mathbb{Z})$ generated by these tautological classes is called the \emph{tautological subalgebra} of $H^*(\mathcal{M}_{E}(r,0),\mathbb{Z})$.
\end{definition}

\begin{definition}
	Let $L$ be a degree zero line bundle over $E$.  Then the cohomology classes $[W^{i}_{r,L}(\exists)]\in
	H^*(\mathcal{SU}_{E}(r,L),\mathbb{Z})$ are called the \emph{tautological classes}.  The subalgebra of  \\ $H^*(\mathcal{SU}_{E}(r,L),\mathbb{Z})$ generated by these tautological classes is called the \emph{tautological subalgebra} of $H^*(\mathcal{SU}_{E}(r,L),\mathbb{Z})$.
\end{definition}

Following theorem shows that the tautological class $\zeta:=[W_{r,L} ^{0}(\exists)]$ is the generator of the tautological subalgebra of $H^*(\mathcal{SU}_{E}(r,L),\mathbb{Z})$.
\begin{theorem}\label{T1}
	Let $r$ be any positive integer and let $L$ be a degree $0$ line bundle over $E$. Then $W_{r,L} ^{0}(\exists)$ 
	is a divisor inside $\mathcal{SU}_{E}(r,L)$. Moreover, in $H^{\ast}(\mathcal{SU}_{E}(r,L),\mathbb{Z})$ we have, 
	\begin{equation*}
	[W_{r,L} ^{i}(\exists)]=[W_{r,L} ^{0}(\exists)]^{i+1}
	\end{equation*} 
	for all $0\leq i\leq r-2$ and the tautological algebra of $\mathcal{SU}_{E}(r,L)$ is $\mathbb{Z}[\zeta]/(\zeta^r)$, where $\zeta$ is the cohomology class of $W^0_{r,L}(\exists)$ in $H^*(\mathcal{SU}_E(r,L),\mathbb{Z})$.
	
\end{theorem}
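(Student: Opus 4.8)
The plan is to turn the abstract isomorphism $\mathcal{SU}_E(r,L)\cong\mathbb{P}^{r-1}$ of Proposition~\ref{P1} into an explicit one under which every Brill--Noether locus $W^i_{r,L}(\exists)$ becomes a linearly embedded projective subspace; the cohomological assertions are then immediate from $H^*(\mathbb{P}^{r-1},\mathbb{Z})=\mathbb{Z}[h]/(h^r)$, with $h$ the hyperplane class in $H^2$. We assume $r\ge 2$, as in Proposition~\ref{P1}; the case $r=1$, where $\mathcal{SU}_E(1,L)$ is a point, is degenerate.

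First I would fix a concrete model. Combining diagram~\eqref{diagram 1} with Remark~\ref{R2}, the space $\mathcal{SU}_E(r,L)$, being the fibre over $L$ of the determinant map, is identified with a fibre of the Abel--Jacobi map $\alpha\colon S^rE\to J_r(E)$, that is, with the complete linear system $|D|=\mathbb{P}(H^0(E,\mathcal{O}(D)))\cong\mathbb{P}^{r-1}$ of an effective divisor $D$ of degree $r$ on $E$ whose class corresponds to $L$. Let $p_0\in E$ be the point corresponding to $\mathcal{O}_E$ under the isomorphism $J(E)\cong E$ of Proposition~\ref{P1}. By Atiyah's classification the strong equivalence class of a semistable degree-$0$ bundle is determined by the multiset $\{L_1,\dots,L_r\}$ of degree-$0$ line-bundle summands of its polystable representative, and that representative realises the maximal value of $h^0$ in the class; hence a point of $\mathcal{M}_E(r,0)\cong S^rE$ lies in $W^i_{r,0}(\exists)$ if and only if $\mathcal{O}_E$ occurs among the $L_j$ with multiplicity at least $i+1$. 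Thus, in the symmetric-product model, the inclusion $W^i_{r,0}(\exists)\cong S^{r-i-1}E\hookrightarrow S^rE\cong\mathcal{M}_E(r,0)$ is the map $\Delta\mapsto\Delta+(i+1)p_0$ (this is where diagram~\eqref{diagram 2} is used, to match the determinant maps). Intersecting with $\mathcal{SU}_E(r,L)$ gives
\[
W^i_{r,L}(\exists)\ \cong\ |D-(i+1)p_0|\ =\ \mathbb{P}(H^0(E,\mathcal{O}(D-(i+1)p_0)))\ \subseteq\ \mathbb{P}(H^0(E,\mathcal{O}(D)))\ =\ |D|,
\]
the embedding being $\Delta\mapsto\Delta+(i+1)p_0$; concretely $W^i_{r,L}(\exists)$ is the set of divisors in $|D|$ of multiplicity at least $i+1$ at $p_0$, which is a linear subspace of $\mathbb{P}^{r-1}$.

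It remains to compute codimensions. Since $E$ has genus one and $K_E\cong\mathcal{O}_E$, for $0\le i\le r-2$ we have $\deg(D-(i+1)p_0)=r-i-1>0$, hence $h^1(E,\mathcal{O}(D-(i+1)p_0))=0$ by Lemma~\ref{L1}, and Riemann--Roch gives $h^0(E,\mathcal{O}(D-(i+1)p_0))=r-i-1$. Therefore $W^i_{r,L}(\exists)$ is a linear subspace of $\mathbb{P}^{r-1}$ of dimension $r-i-2$, i.e.\ of codimension $i+1$; in particular $W^0_{r,L}(\exists)$ is a hyperplane, hence a divisor. Since the class of a codimension-$c$ linear subspace of $\mathbb{P}^{r-1}$ is $h^c$, we obtain $\zeta:=[W^0_{r,L}(\exists)]=h$ and $[W^i_{r,L}(\exists)]=h^{i+1}=\zeta^{i+1}$ for all $0\le i\le r-2$. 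As $h$ generates $H^*(\mathbb{P}^{r-1},\mathbb{Z})$ as a ring, the subalgebra generated by the tautological classes is all of $H^*(\mathcal{SU}_E(r,L),\mathbb{Z})$, which is thus $\mathbb{Z}[\zeta]/(\zeta^r)$.

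The step I expect to be the main obstacle is the compatibility bookkeeping in the second paragraph: one must choose the isomorphisms of Proposition~\ref{P1} (Tu's identifications of $\mathcal{M}_E(r,0)$ and of the $W^i_{r,0}(\exists)$ with symmetric products) simultaneously, so that each inclusion $W^i_{r,0}(\exists)\subset\mathcal{M}_E(r,0)$ becomes addition of $(i+1)p_0$ and all the determinant maps match the Abel--Jacobi maps as in diagrams~\eqref{diagram 1} and~\eqref{diagram 2}. The genuinely delicate point is to pin down the reference point $p_0\in E$ — the one corresponding to $\mathcal{O}_E$, equivalently to the trivial summands $\mathcal{O}_E^{\oplus(i+1)}$ in the most special representative of a point of $W^i_{r,0}(\exists)$ — and to track it through these identifications. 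Once this is done, the cohomological conclusions are purely formal, and the remaining inputs are supplied by \cite{T} and \cite{At}.
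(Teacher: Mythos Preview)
Your proof is correct and takes essentially the same approach as the paper: identify $\mathcal{SU}_E(r,L)$ with $\mathbb{P}^{r-1}$ and each $W^i_{r,L}(\exists)$ with a linear $\mathbb{P}^{r-i-2}$ inside it via Proposition~\ref{P1}, then read off the relations from $H^*(\mathbb{P}^{r-1},\mathbb{Z})=\mathbb{Z}[\zeta]/(\zeta^r)$. You are more explicit than the paper in checking that the embeddings are genuinely linear (via the complete-linear-system description $|D-(i+1)p_0|\subset|D|$), a point the paper simply inherits from the stratification quoted from \cite{T}.
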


\begin{proof}
	We have the following stratification inside $\mathcal{SU}_{E}(r,L)$ by Proposition \ref{P1}.
	\[
	\begin{matrix}
	&& \mathcal{SU}_{E}(r,L) & \cong & \mathbb{P}^{r-1} \\
	&& \rotatebox{90}{$\subseteq$} && \rotatebox{90}{$\subseteq$} \\
	&&  W_{r,L} ^{0}(\exists) & \cong &\mathbb{P}^{r-2} \\
	&& \rotatebox{90}{$\subseteq$} && \rotatebox{90}{$\subseteq$} \\
	&&  W_{r,L} ^{1}(\exists) & \cong & \mathbb{P}^{r-3}\\
	&& \rotatebox{90}{$\subseteq$} && \rotatebox{90}{$\subseteq$} \\
	&& \cdot && \cdot \\
	&& \cdot && \cdot \\
	&& \cdot && \cdot \\
	&& \rotatebox{90}{$\subseteq$} && \rotatebox{90}{$\subseteq$} \\
	&&  W_{r,L} ^{r-3}(\exists) & \cong & \mathbb{P}^{1}\\
	&& \rotatebox{90}{$\subseteq$} && \rotatebox{90}{$\subseteq$} \\
	&&  W_{r,L} ^{r-2}(\exists) & \cong & \mathbb{P}^{0} & \cong & \{\cdot \}
	\end{matrix}
	\]
So, $W_{r,L} ^{0}(\exists)$ is a subvariety of $\mathcal{SU}_{E}(r,L)$ of
codimension $1$ and hence a divisor.  We can calculate relations
between $[\mathbb{P}^{i}]$'s as follows. Inside $\mathbb{P}^{r-1}$ we
have the following stratification:
	\begin{equation*}
	\{\cdot \}\subseteq \mathbb{P}^{1}\subseteq \mathbb{P}^{2}\subseteq \cdots \subseteq \mathbb{P}^{r-2}\subseteq \mathbb{P}^{r-1}.
	\end{equation*}
	Then we have:
	\begin{equation}\label{E2}
	H^{\ast}(\mathbb{P}^{r-1},\mathbb{Z})=\dfrac{\mathbb{Z}[\zeta]}{<\zeta^{r}>}
	\end{equation}
	 where $\zeta$ is the cohomology class of $\mathbb{P}^{r-2}$, that is, $\zeta=[\mathbb{P}^{r-2}]=c_1(\mathcal{O}(1))$, $c_1(\mathcal{O}(1))$ being the first chern class of $\mathcal{O}(1)$ over $\mathbb{P}^{r-1}$.  Moreover in $H^{\ast}(\mathbb{P}^{r-1},\mathbb{Z})$, we have:
	 \begin{equation}\label{E3}
	 [\mathbb{P}^{r-1-k}]=\zeta^{k}.
	 \end{equation}
	 Therefore, by \eqref{E2} and Proposition \ref{P1} we get:
	 \begin{equation*}
	 H^*(\mathcal{SU}_E(r,L),\mathbb{Z})=\dfrac{\mathbb{Z}[\zeta]}{<\zeta^{r}>}.
	 \end{equation*}
	Furthermore, by \eqref{E3} and Proposition \ref{P1} we get the following equality in $H^*(\mathcal{SU}_E(r,L),\mathbb{Z})$:
	\begin{equation*}
	[W_{r,L} ^{i}(\exists)]= [\mathbb{P}^{r-i-2}]=[\mathbb{P}^{r-1-(i+1)}]=\zeta^{i+1}=[\mathbb{P}^{r-2}]^{i+1}=[W_{r,L} ^{0}(\exists)]^{i+1}. 
	\end{equation*}
	Hence the theorem follows.
\end{proof}

The next theorem is about some relations between the generators of the tautological subalgebra of $H^{\ast}(\mathcal{SU}_{E}(r,L),\mathbb{Z})$ and $H^{\ast}(\mathcal{M}_{E}(r,0),\mathbb{Z})$.  The theorem says that tautological algebra of $\mathcal{M}_E(r,0)$ is generated by the cohomology class of the Brill-Noether subvariety $W^0_{r,0}(\exists)$ as an $H^*(E)$-algebra.

\begin{theorem}\label{T2}
The	tautological algebra of $\mathcal{M}_E(r,0)$ is
$$
H^*(E)\otimes \mathbb{Z}(\xi)/(\xi^r). 
$$
Here $\xi$ is the cohomology class of the divisor $W^0_{r,0}(\exists)$ on $\mathcal{M}_E(r,0)$ in $H^*(\mathcal{M}_E(r,0),\mathbb{Z})$. 
\end{theorem}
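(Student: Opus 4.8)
The plan is to run an argument parallel to that of Theorem~\ref{T1}, the difference being that the base of the relevant fibration is now $J(E)\cong E$, which carries nontrivial cohomology; so the computation of $H^*(\mathbb P^{r-1})$ used there is replaced by the projective bundle theorem over $E$ together with a Chern‑class computation.

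First I would use Proposition~\ref{P1} to replace $\mathcal M_E(r,0)$ by $S^rE$, each $W^i_{r,0}(\exists)$ by $S^{r-i-1}E$ for $0\le i\le r-1$, and $J(E)$ by $E$. By diagram~\eqref{diagram 1} together with Remark~\ref{R2}, the determinant map is identified with the Abel--Jacobi map $\alpha\colon S^rE\to J_r(E)\cong E$, and this exhibits $S^rE$ as the projectivization $\mathbb P(\mathcal H)$ of the rank‑$r$ bundle $\mathcal H=p_{2*}\mathcal P$ on $E$, where $\mathcal P$ is a Poincar\'e bundle on $E\times E$ and $p_2$ is the projection onto the second factor; the push‑forward is locally free of rank $r$ because $h^0\equiv r$ and $h^1\equiv 0$ by Lemma~\ref{L1}. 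The projective bundle theorem then gives
\[
H^*(\mathcal M_E(r,0))\;\cong\;H^*(E)[\xi]\big/\big(\xi^r+a_1\xi^{r-1}+\cdots+a_r\big),\qquad a_j=\pm\,c_j(\mathcal H)\in H^{2j}(E),
\]
a free $H^*(E)$-module with basis $1,\xi,\dots,\xi^{r-1}$, where $\xi:=c_1(\mathcal O_{\mathbb P(\mathcal H)}(1))$.

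The geometric heart of the proof is to match this $\xi$ with the tautological classes. On one hand, using diagram~\eqref{diagram 2} with $i=0$, the divisor $W^0_{r,0}(\exists)\cong S^{r-1}E$ is a $\mathbb P^{r-2}$‑subbundle of $S^rE$ over $E$ meeting every fibre $\mathbb P^{r-1}$ in a hyperplane; identifying it with the zero scheme of the evaluation section of $\mathcal O_{\mathbb P(\mathcal H)}(1)$ (for the appropriate normalization of $\mathcal P$) shows $[W^0_{r,0}(\exists)]=\xi$. On the other hand, iterating diagram~\eqref{diagram 2} gives the flag of projective subbundles $W^{r-1}_{r,0}(\exists)\subseteq\cdots\subseteq W^0_{r,0}(\exists)\subseteq\mathcal M_E(r,0)$ in which each term is a relative hyperplane section of the previous one for $\mathcal O_{\mathbb P(\mathcal H)}(1)$; hence $[W^{i+1}_{r,0}(\exists)]=\xi\cdot[W^i_{r,0}(\exists)]$, and by induction $[W^i_{r,0}(\exists)]=\xi^{i+1}$ for all $0\le i\le r-1$. (Alternatively the same identities follow by a moving‑lemma computation: $W^i_{r,0}(\exists)$ corresponds to $\{D\in S^rE:\,D\ge(i{+}1)p_0\}$, which is the transverse intersection of $i+1$ general translates of $W^0_{r,0}(\exists)$, each homologous to it since $E$ is connected, with flat limit $W^i_{r,0}(\exists)$ taken with multiplicity one.) It follows that the tautological subalgebra is exactly the $H^*(E)$‑subalgebra generated by $\xi$, which by the displayed presentation is all of $H^*(\mathcal M_E(r,0))$, free over $H^*(E)$ on $1,\xi,\dots,\xi^{r-1}$.

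What remains — and this is the step I expect to require the real work — is to pin down the single relation satisfied by $\xi$ over $H^*(E)$. Since $a_j\in H^{2j}(E)=0$ for $j\ge 2$, that relation reads $\xi^r=-a_1\xi^{r-1}$ with $a_1=\pm\,c_1(\mathcal H)$, so the asserted relation $\xi^r=0$ is equivalent to $c_1(\mathcal H)=0$, equivalently to computing the top self‑intersection $\xi^r$ against the fibre class. I would establish this by Grothendieck--Riemann--Roch for $p_2\colon E\times E\to E$, using that $T_E$ is trivial so that the Todd class of the relative tangent bundle is $1$, and keeping careful track of the Poincar\'e normalization and of the $H^1\otimes H^1$ K\"unneth component of $c_1(\mathcal P)$. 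Once the relation is identified, combining it with $[W^i_{r,0}(\exists)]=\xi^{i+1}$ and the determinant identification $H^*(E)=\det^*H^*(E)\subseteq H^*(\mathcal M_E(r,0))$ yields precisely the algebra $H^*(E)\otimes\mathbb Z(\xi)/(\xi^r)$ with $\xi=[W^0_{r,0}(\exists)]$, as claimed.
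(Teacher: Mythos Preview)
Your argument follows the same route as the paper's: use diagram~\eqref{diagram 1} and Remark~\ref{R2} to exhibit $\mathcal M_E(r,0)\cong S^rE$ as a $\mathbb P^{r-1}$-bundle over $J(E)\cong E$, apply the projective bundle formula, and then read off $[W^i_{r,0}(\exists)]=\xi^{i+1}$ from diagram~\eqref{diagram 2} (the paper records this only for $0\le i\le r-2$, since diagram~\eqref{diagram 2} needs $r-i-1\ge 1$; your range $i\le r-1$ overshoots by one). The paper's own proof is considerably terser and does not isolate the Chern-class step you flag in your final paragraph---it simply asserts the projective bundle formula in the form $H^*(E)\otimes\mathbb Z(\xi)/(\xi^r)$ and the identities $[W^i_{r,0}(\exists)]=\xi^{i+1}$ without further justification.
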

\begin{proof}
We have the following stratification inside $\mathcal{M}_{E}(r,0)$  by Proposition \ref{P1}.
\[
\begin{matrix}
&& \mathcal{M}_{E}(r,0) &  \cong & S^{r}E \\
&& \rotatebox{90}{$\subseteq$} && \rotatebox{90}{$\subseteq$} \\
&&  W_{r,0} ^{0}(\exists) & \cong & S^{r-1}E \\
&& \rotatebox{90}{$\subseteq$} && \rotatebox{90}{$\subseteq$} \\
&&  W_{r,0} ^{1}(\exists) & \cong & S^{r-2}E\\
&& \rotatebox{90}{$\subseteq$} && \rotatebox{90}{$\subseteq$} \\
&& \cdot && \cdot \\
&& \cdot && \cdot \\
&& \cdot && \cdot \\
&& \rotatebox{90}{$\subseteq$} && \rotatebox{90}{$\subseteq$} \\
&&  W_{r,0} ^{r-2}(\exists) & \cong & S^{1}E & \cong & E

\end{matrix}
\] 
So, $W_{r,0} ^{0}(\exists)$ is a subvariety of
$\mathcal{M}_{E}(r,0)$ of codimension $1$ and hence a divisor.  By \eqref{diagram 1} and by Remark \ref{R2}, the determinant morphism 
$$
\mathcal{M}_E(r,0)\rightarrow J(E)
$$
is a projective bundle $\mathbb{P}^{r-1}_{J(E)} \rightarrow J(E)$.

Hence, by projective bundle formula,
\begin{equation}\label{E4}
H^*(\mathcal{M}_E(r,0),\mathbb{Z})\,=\,H^*(J(E))\otimes \mathbb{Z}(\xi)/(\xi^r).
\end{equation}
Here $\xi$ is the first Chern class of $\mathcal{O}(1)$ on $\mathbb{P}^{r-1}_{J(E)}$.

Therefore by \eqref{E4} and Proposition \ref{P1} we get,
\begin{equation*}
H^*(\mathcal{M}_E(r,0),\mathbb{Z})\,=\,H^*(E)\otimes \mathbb{Z}(\xi)/(\xi^r).
\end{equation*}
However, by \eqref{diagram 2}, we have the equality of the cohomology classes:
$$
[W^i_{r,0}(\exists)]=\xi^{i+1}
$$
for all $0\leq i\leq r-2$.
This gives the assertion.

\end{proof}

\vspace*{.1 in}

\section*{Acknowledgement}
\label{sec:Acknowledgement}
I would like to express my sincere gratitude to Prof. Jaya N. N. Iyer for introducing this problem to me and for her valuable suggestions.  I would like to thank Dr. Archana S Morye and Dr. Tathagata Sengupta for their continuous guidance and encouragements throughout the work.  I would also like to thank University Grants Commission (UGC) (ID~- ~424860) for financial support.

\vspace*{.35 in}

\end{document}